\newtheorem{theorem}{Theorem}[section]
\newtheorem{lemma}[theorem]{Lemma}
\newtheorem{conjecture}[theorem]{Conjecture}
\newtheorem{corollary}[theorem]{Corollary}
\theoremstyle{definition}
\newtheorem{definition}[theorem]{Definition}
\theoremstyle{remark}
\newtheorem{remark}[theorem]{Remark}
\numberwithin{equation}{section}
\newcommand{\fC}{\mathfrak C}
\newcommand{\fcc}{{\rm FCC}}
\newcommand{\cC}{\mathcal C}
\newcommand{\cU}{\mathcal U}
\newcommand{\cV}{\mathcal V}
\newcommand{\fd}{\protect{\rm free}$-${\rm dim}}
\newcommand{\C}{\mathcal C}
\newcommand{\II}{\mathcal I}
\newcommand{\PP}{\mathcal P}
\newcommand{\N}{\mathbb N}
\newcommand{\fA}{\mathfrak A}
\newcommand{\fB}{\mathfrak B}
\newcommand{\cG}{\mathcal G}
\newcommand{\cF}{\mathcal F}
\newcommand{\cH}{\mathcal H}
\newcommand{\sub}{\subseteq}
\newcommand{\eps}{\varepsilon}
\newcommand{\er}{\mathbb R}
\newcommand{\en}{\mathbb N}
\def\At{\operatorname{Atom}}
\begin{document}
\baselineskip=17pt

\title[Marde\v{s}i\'c conjecture]{Marde\v{s}i\'c conjecture and free products\\ of Boolean algebras}

\author[G.\ Mart\'{\i}nez-Cervantes]{Gonzalo Mart\'{\i}nez-Cervantes}
\address{Departamento de Matem\'{a}ticas\\
Facultad de Matem\'{a}ticas\\ Universidad de Murcia\\ 30100 Espinardo, Murcia\\
Spain}
 \email{gonzalo.martinez2@um.es}
\author[G. Plebanek]{Grzegorz Plebanek}
\address{Instytut Matematyczny\\ Uniwersytet Wroc\l awski\\ Pl.\ Grunwaldzki 2/4\\
50-384 Wroc\-\l aw\\ Poland}
\email{grzes@math.uni.wroc.pl}

\thanks{
The first author was partially supported by  the research project 19275/PI/14 funded by Fundaci\'{o}n S\'{e}neca -- Agencia de Ciencia y Tecnolog\'{i}a de la Regi\'{o}n de Murcia within the framework of PCTIRM 2011-2014 and by Ministerio de Econom\'{i}a y Competitividad and FEDER (project MTM2014-54182-P).\\
The research done during the second's author stay at Facultad de Matem\'aticas, Universidad de Murcia, 
supported by { Fundaci\'on S\'eneca} -- Agencia de Ciencia y Tecnolog\'{i}a de la Regi\'{o}n de Murcia,
 through its Re\-gio\-nal Programme {\em Jim\'enez de la Espada}.}
\subjclass[2010]{Primary 54F05, 54F45; secondary 03G05, 06E15.}

\keywords{Linearly ordered topological space, interval algebra, free product.}

\date{}

\begin{abstract}
We show that for every $d\ge 1$, if $L_1,\ldots, L_d$ are linearly ordered compact spaces and there is a continuous surjection 
\[ L_1\times L_2\times \dots\times L_d\to K_1\times K_2\times\ldots\times K_{d}\times K_{d+1},\]
where all the spaces $K_i$ are infinite, then $K_i, K_j$ are metrizable for some $1\le i<j\le d+1$.
This answers a problem posed by Marde\v{s}i\'c  \cite{Mard70, Mard15}.

We present some related results  on Boolean algebras not containing free products with too many uncountable factors.
In particular, we answer a problem on initial chain algebras that was posed in \cite{BaurHeindorf97}.
\end{abstract}

\maketitle
\section{Introduction}\label{i}

The classical Peano curve demonstrates that the unit interval, a metrizable linearly ordered compact space, may be 
continuously mapped onto its square. This cannot happen for compact lines that are not metrizable: 
Treybig  \cite{Trey64} and Ward  \cite{Ward} proved that if a 
product of two infinite compact spaces is a  continuous image of a linearly ordered compact space then such a product is necessarily metrizable (a simpler proof of this result can be found in \cite{BDK81}).  

Subsequently, S. Marde\v{s}i\'c partially generalized the result of Treybig to larger number of factors. He    proved in \cite{Mard70} that whenever a product of $d$ linearly ordered compact spaces can be mapped onto a product of $d+s$ {\em separable} infinite compact spaces $K_1, \dots K_{d+s}$ with $s\geq 1$,  then there are at least $s+1$ metrizable factors $K_j$. In the same paper he raised a question, if the separability assumption may be dropped,
stating the following conjecture  (see also the survey paper    \cite{Mard15}):

\begin{conjecture}\label{i:1}
If a product of $d$ linearly ordered compact spaces can be mapped onto a product of $d+s$ infinite compact spaces $K_1, K_2, \dots, K_{d+s}$ with $s\geq1$, then there are at least $s+1$ metrizable factors $K_j$.
\end{conjecture}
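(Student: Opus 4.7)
The plan is to translate everything to Boolean algebras via Stone duality and then argue by induction on $d$. A continuous surjection
\[ \phi:L_1\times\cdots\times L_d\to K_1\times\cdots\times K_{d+1} \]
corresponds dually to an embedding of the free product $B_1\oplus\cdots\oplus B_{d+1}$ into the free product $A_1\oplus\cdots\oplus A_d$, where each $A_i=\clop(L_i)$ is an interval algebra, each $B_j=\clop(K_j)$ is an infinite Boolean algebra, and $K_j$ is metrizable iff $B_j$ is countable. Thus the goal becomes a purely algebraic one: inside a free product of $d$ interval algebras one cannot find $d+1$ mutually independent subalgebras of which at most one is countable.

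\textbf{Induction on $d$.} The base case $d=1$ is exactly the Boolean form of the Treybig--Ward theorem, which says that an interval algebra contains no free product of two uncountable subalgebras. For the inductive step, I would assume an embedding as above with all $B_j$ uncountable and try to produce a slice of the domain on which $\phi$ still surjects onto a product with $d+1$ non-metrizable factors, but whose source is a product of only $d-1$ linearly ordered compact spaces. Concretely, one looks for a point $p\in L_d$ such that the restriction of $\phi$ to $L_1\times\cdots\times L_{d-1}\times\{p\}$ has image meeting each uncountable clopen subset of each $K_j$ in a ``large'' set; the inductive hypothesis applied to that restriction would then furnish the contradiction.

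\textbf{Where the main difficulty lies.} An arbitrary $p\in L_d$ is useless: fixing the $A_d$-coordinate may collapse some $B_j$ to a countable algebra or destroy the mutual independence of the $B_j$'s. To locate a good $p$, my plan is to pick, inside each $B_j$, an uncountable independent family of witnesses and then run a $\Delta$-system / Ramsey-type argument so that the ``$A_d$-supports'' of those witnesses form a uniform, controllable family. Because $A_d$ is an interval algebra, its Stone space is itself a compact ordered space, and one can refine further so that the supports form a monotone chain of intervals; then choosing $p$ in an appropriate tail of that chain freezes the $A_d$-coordinate in a way that preserves both the uncountability of each restricted $B_j$ and their mutual independence. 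Carrying out this extraction simultaneously for all $d+1$ subalgebras without losing witnesses for any of them, and in a form compatible with a clean induction, is, I expect, the technical heart of the argument; results on initial chain algebras in the spirit of \cite{BaurHeindorf97} look like the natural tool for packaging this combinatorics.
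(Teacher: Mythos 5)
Your proposal has two genuine gaps, either of which is fatal. First, the Stone-dual reformulation is not available at the stated level of generality: the conjecture concerns arbitrary compact spaces, and a continuous surjection induces an embedding of clopen algebras only between zero-dimensional spaces. Worse, for general compacta metrizability is \emph{not} equivalent to countability of the clopen algebra --- a nonmetrizable connected $K_j$ (a long segment, say, or $[0,1]^{\omega_1}$) has trivial clopen algebra, so your ``purely algebraic goal'' is strictly weaker than the conjecture and does not imply it. One can split points to replace each domain line $L_i$ by a zero-dimensional compact line mapping onto it, but there is no analogous reduction on the target side. Even within the zero-dimensional setting your quantifiers undershoot: as the paper's introduction explains, the case $s=1$ requires ruling out images $K_1\times\cdots\times K_{d+1}$ with $d$ nonmetrizable factors and one merely \emph{infinite} (possibly metrizable) factor; dually, a free product of $d$ uncountable algebras and one infinite algebra. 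Your inductive hypothesis ``all $B_j$ uncountable'' would only yield ``at least one metrizable factor,'' not the required $s+1=2$, and similarly your base case should be the full Treybig statement (no free product of an infinite and an uncountable subalgebra inside an interval algebra), not two uncountable ones.

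Second, and more seriously, the inductive slicing step is not a proof but a restatement of the difficulty, and there is no reason it can be carried out: the slice images $\theta\bigl[L_1\times\cdots\times L_{d-1}\times\{p\}\bigr]$ are merely closed sets covering the target, and every single one of them can meet the product thinly (graph-like), simultaneously collapsing uncountability or independence for the restricted algebras; the $\Delta$-system/Ramsey extraction you defer to is precisely the unproved heart, and nothing in the initial chain algebra machinery of \cite{BaurHeindorf97} supplies it. The paper's actual proof is non-inductive and avoids slices altogether: it introduces a covering invariant, the free dimension, which is monotone under continuous images and closed subspaces (Theorem \ref{fd:5}), shows a product of $d$ compact lines has free dimension at most $d$ (Corollary \ref{fd:3}, since joint refinements of interval covers grow polynomially of degree $d$), and proves by a counting argument built on a Ramsey-type lemma (Lemma \ref{fdp:2}) that a product of $d$ nonmetrizable compacta with one infinite factor has free dimension at least $d+1$ (Theorem \ref{fdp:1}): one exhibits $(n+1)^d(p+1)$ points pairwise $1/2$-separated by $nd+p$ functions, each admitting a good cover of controlled $\chi$-value, which contradicts the degree-$d$ refinement bound $(ndm+pm_1)^d$ once $p>(md)^d$ and $n$ is large. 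If you want to salvage your route, you would need both a genuine reduction of the general conjecture to the zero-dimensional case on the target side and a replacement for the slice-selection step --- in effect, some global quantitative comparison like the one the paper makes.
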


Let us note that  Conjecture \ref{i:1}, in its full generality,  easily follows from its particular case, when  $s=1$.
Indeed, take $K_1, \dots, K_{d+s}$ infinite compact spaces with at most $s$ metri\-za\-ble factors; 
without loss of generality, let $K_1, \dots, K_{d}$ be nonmetrizable. Then
\[ K_1  \times \dots \times K_{d+s} = K_1 \times \dots \times K_{d} \times \big( K_{d+1} \times \dots \times K_{d+s}\big)\] 
can be written as a product of $d+1$ infinite compact spaces with at most one metrizable factor, so it suffices to prove that such a space cannot be a continuous image of a product of $d$ linearly ordered compact spaces.

 On the other hand, Avil\'es \cite{Avil09} proved that,  under the assumptions  of \ref{i:1}, there are at least $s+1$ separable factors $K_j$ . This result, combined with Marde\v{s}i\'c's result mentioned above, implies that if a product of two linearly ordered compact spaces can be mapped onto a product of three infinite compact spaces $K_1,~K_2,~K_3$, then at least one $K_j$ is metrizable \cite{Avil09,Mard15}.

In this paper we present a proof of  Conjecture \ref{i:1}. Our argument is based on properties of a certain
kind of dimension, called here the free dimension, that we introduce and investigate in Section \ref{fd}.
The basic idea standing behind that concept   is that it is a topological invariant which, in a sense, can detect how many nonmetrizable factors a given compact space has.
To outline our proof consider, for simplicity, the case $d=2$.
Infinite compact lines have free dimension $1$, and a product of two such spaces has free dimension $\le 2$.
Since the free dimension is not increased by continuous surjections,  
Conjecture \ref{i:1} follows immediately from our Theorem 
\ref{fdp:1}, stating that, in particular, if $K_1,K_2$ are nonmetrizable compacta and $K_3$ is an infinite compactum then the free dimension of $K_1\times K_2\times K_3$ is at least $3$. 
This reasoning works for an arbitrary value of $d$,  see Section \ref{fdp} for details.

Let us recall that a Boolean algebra is said to be an {\em interval algebra} if it is generated by a linearly ordered set of generators. Interval algebras correspond, via the Stone duality, to zerodimensional linearly ordered compacta. In particular, \ref{i:1} says that if $\fA_1,\fA_2$ are uncountable Boolean algebras and
the algebra $\fA_3$ is infinite then  the free product 
$\fA_1\otimes \fA_2\otimes \fA_3$ cannot be embedded into
a free product of two interval algebras. In Section \ref{ba} we present some generalizations of such a result ---we single out  a larger class of Boolean algebras whose Stone spaces
do not admit continuous surjections onto products of too many nonmetrizable compact spaces.
It turns out that our notion of free dimension is naturally connected with properties of Boolean algebras
generated by families not containing too many independent elements.
 
 Baur and Heindorf \cite{BaurHeindorf97} investigated the class of the so called initial chain algebras 
(a natural extension of the class of interval algebras).
Using the free dimension we were able to answer a question posed in \cite{BaurHeindorf97}:
We prove  that if $\fA$ and $\fB$ are infinite Boolean algebras and $\fA$ is uncountable, 
then the free product $\fA \otimes \fB$ cannot be embedded into an initial chain algebra. 

We wish to thank Antonio Avil\'es for very stimulating discussions concerning the subject of the paper and several valuable suggestions.

\section{Free dimension of compacta}\label{fd}

Given a compact space $K$, we shall write $\fcc(K)$ for the family of finite closed covers
of $K$. For any two finite covers $\cC_1$ and $\cC_2$ of $K$  we write $\C_1\prec \cC_2$ if the cover $\cC_1$ is finer than $\cC_2$, that is, if every $C_1\in \cC_1$ is contained in some $C_2\in\cC_2$.
Let us say that the family $\fC\sub\fcc(K)$ is a {\em topologically cofinal family of covers}
if for every {\bf open} cover $\cU$ of $K$ there is $\cC\in \fC$ such that $\cC\prec \cU$.
 
Our considerations are based on the following new concept.

\begin{definition}\label{fd:1}
Let $d$ be a natural number. We say that
 a compact space $K$ has {\em free dimension} $\le d$ and write $\fd(K)\le d$ if 
 there are a topologically cofinal family $\fC\sub \fcc(K)$, a constant $M>0$ and a function
 $\chi:\fC\to\en$ such that for every $k$, any
 $\C_1, \C_2, \dots, \C_k\in \fC$ have a joint refinement $\cC\in\fcc(K)$ such that
 \[\big| \cC\big|\le M\big(\chi(\cC_1)+\chi(\cC_2)+\ldots+ \chi(\cC_k)\big)^d.\]
 \end{definition}

We also write $\fd(K)\ge d+1$ to indicate that $\fd(K)\le d$ does not hold;
the relation $\fd(K)=d$ is defined accordingly.

\begin{lemma}\label{fd:1.5}
A compact space $K$ has free dimension $0$ if and only if it is finite.
\end{lemma}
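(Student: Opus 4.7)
The plan is to prove both directions directly from Definition \ref{fd:1} applied with $d=0$, using the convention $n^0=1$ (so that the condition $\fd(K)\le 0$ amounts to saying there is a topologically cofinal family $\fC$ of finite closed covers such that, for every $k$, any $k$ members of $\fC$ admit a joint closed refinement of cardinality at most $M$).

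For the ``if'' direction, suppose $K$ is finite. Being compact Hausdorff and finite, $K$ is discrete, so the cover by singletons $\cC_0=\{\{x\}:x\in K\}$ is a finite closed cover. Put $\fC=\{\cC_0\}$, $M=|K|$, and $\chi\equiv 1$. In a discrete space every open cover is refined by $\cC_0$, so $\fC$ is topologically cofinal. For any $k\ge 1$, the only choice of $\cC_1,\dots,\cC_k\in\fC$ is $\cC_0,\dots,\cC_0$, and $\cC_0$ is its own refinement, satisfying $|\cC_0|=|K|=M=M\cdot k^0$. Hence $\fd(K)\le 0$.

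For the ``only if'' direction, suppose $\fd(K)\le 0$, witnessed by $\fC$, $M$, $\chi$. Applying Definition \ref{fd:1} with $k=1$ to any $\cC_1\in\fC$ produces a closed refinement $\cC$ of $\cC_1$ with $|\cC|\le M$. Combining this with the topological cofinality of $\fC$, every open cover of $K$ admits a closed refinement of size at most $M$. I then argue by contradiction that $K$ must be finite. Assume $K$ is infinite and pick $M+1$ distinct points $x_0,x_1,\dots,x_M\in K$. By Hausdorffness there are pairwise disjoint open neighborhoods $U_0,\dots,U_M$ of these points; since finite sets are closed in a Hausdorff space, $V:=K\setminus\{x_0,\dots,x_M\}$ is open, and $\cU:=\{U_0,\dots,U_M,V\}$ is an open cover of $K$. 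Let $\cC$ be any closed refinement of $\cU$. For each $i$ some $C\in\cC$ contains $x_i$, and such a $C$ must lie inside $U_i$ (because $x_i\notin V$ and $x_i\notin U_j$ for $j\ne i$); since the $U_i$ are disjoint no single $C\in\cC$ can contain two of the $x_i$, forcing $|\cC|\ge M+1$ and contradicting $|\cC|\le M$.

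There is no real obstacle: the whole argument is a transparent application of Hausdorff separation together with the $d=0$ case of the defining inequality. The only small point to be careful about is how to read $(\chi(\cC_1)+\cdots+\chi(\cC_k))^0$ when the sum happens to vanish; adopting the standard convention $0^0=1$ (or simply noting that in the converse direction one can take $k=1$ with a cover whose $\chi$-value is positive, the alternative forcing $K=\varnothing$ anyway) disposes of this completely.
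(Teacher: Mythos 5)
Your proof is correct and follows essentially the same route as the paper: the paper also witnesses $\fd(K)\le 0$ for finite $K$ via the cover by singletons, and its ``immediate'' claim that $|K|\le M$ in the converse direction is exactly the Hausdorff separation argument you spell out with the $M+1$ points and disjoint neighborhoods. Your explicit treatment of the $k=1$ case and the $(\cdot)^0$ convention just makes precise what the paper leaves implicit.
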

\begin{proof}
If $K$ has free dimension $0$ then it is immediate that any constant $M$ witnessing that $\fd(K)=0$ must satisfy $|K| \le M$, so $K$ is finite.

Conversely, if $K$ is finite then the cover consisting of singletons is finer than any other cover, so  $\fC = \fcc (K)$, $M=|K|$ and any function $\chi:\fC \to \en$ satisfy the condition on Definition \ref{fd:1} for $d=0$.
\end{proof}

Let us note that we allow any constant $M$ in Definition \ref{fd:1} for the sake of Lemma \ref{fd:1.5};
if $d\ge 1$ then we can as well take $M=1$, after rescaling the function $\chi$ in question.
Thus in the sequel, we rather use the following definition.

\begin{remark}
\label{remark1}	
A compact space $K$ has free dimension $\le d$, where $d\ge 1$, if and only if there are a topologically cofinal family $\fC\sub \fcc(K)$ and a function
$\chi:\fC\to\en$ such that for every $k$, any
$\C_1, \C_2, \dots, \C_k\in \fC$ have a joint refinement $\cC\in\fcc(K)$ such that
\[\big| \cC\big|\le \big(\chi(\cC_1)+\chi(\cC_2)+\ldots+ \chi(\cC_k)\big)^d.\]
\end{remark}

One might expect that the function $\chi$ from Definition \ref{fd:1} should be of
some concrete type, such as  $\chi(\cC)=M\cdot |\cC|$ for some constant $M$. However,
our definition makes the notion of free dimension more flexible and simplifies some considerations,
cf.\ Theorem \ref{fd:5} below.

\begin{lemma}\label{fd:1.7} If $K$ is an infinite linearly ordered  compact space
then $\fd(K)=1$.
\end{lemma}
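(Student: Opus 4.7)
The plan is to verify $\fd(K)\ge 1$ via Lemma \ref{fd:1.5} (immediate, since $K$ is infinite) and to construct an explicit witness for $\fd(K)\le 1$. I would take $\fC$ to be the family of all finite covers of $K$ by closed order intervals $[a,b]=\{x\in K:a\le x\le b\}$ with $a,b\in K$, allowing $a=b$ (so singletons are included), and set $\chi(\cC):=4|\cC|$; the aim is then to verify the condition of Remark \ref{remark1} with $d=1$.

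For topological cofinality, given an open cover $\cU$ of $K$, I would consider the set $A\sub K$ of points $x$ such that $[\min K,x]\cap K$ admits a finite cover by closed intervals each contained in some $U\in\cU$. Trivially $\min K\in A$. For $s=\sup A$, pick $U\in\cU$ with $s\in U$ and a basic open $V\sub U$ containing $s$; any $t\in A\cap V$ with $t<s$ allows extending a cover of $[\min K,t]$ by $[t,s]\sub V$, showing $s\in A$ (and if $s\in A\cap V$ already, there is nothing to do). If moreover $s<\max K$, then either a $K$-point $t>s$ lies in a basic neighborhood of $s$ (producing a larger element of $A$ via adjoining $[s,t]$), or else $s$ has an immediate successor $s^+$ in $K$, in which case adjoining the singleton $\{s^+\}$, covered by some $U'\in\cU$, extends the cover to $[\min K,s^+]$. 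Either outcome contradicts $s=\sup A$, so $\sup A=\max K\in A$, witnessing that $\fC$ is topologically cofinal.

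For joint refinement, given $\cC_1,\ldots,\cC_k\in\fC$, let $E\sub K$ be the union of all endpoints of intervals occurring in the $\cC_j$; each interval contributes at most two points, so $|E|\le 2\sum_j|\cC_j|$, and $\min K,\max K\in E$ since any covering interval containing $\min K$ must have $\min K$ as its left endpoint. Enumerate $E$ as $\min K=p_0<p_1<\cdots<p_N=\max K$ and define
\[
\cC=\{\{p_i\}:0\le i\le N\}\cup\{[p_i,p_{i+1}]:K\cap(p_i,p_{i+1})\ne\emptyset\}.
\]
Then $\cC\in\fcc(K)$ covers $K$, satisfies $|\cC|\le 2N+1\le 4\sum_j|\cC_j|=\sum_j\chi(\cC_j)$, and refines each $\cC_j$: singletons are trivially refined, while for a ``long'' $[p_i,p_{i+1}]$, any $x\in K\cap(p_i,p_{i+1})$ lies in some $I=[a,b]\in\cC_j$ whose endpoints $a,b$ belong to $E$; since no $E$-point lies strictly between $p_i$ and $p_{i+1}$, we get $a\le p_i$ and $b\ge p_{i+1}$, hence $[p_i,p_{i+1}]\sub I$.

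The main obstacle is the cofinality step: the naive family of ``partitions with shared endpoints'' $\{[a_i,a_{i+1}]\}$ is \emph{not} topologically cofinal in general, because an open cover $\cU$ consisting of singletons around isolated points of $K$ may contain no $U\in\cU$ simultaneously covering two adjacent isolated points. This is what forces $\fC$ to consist of covers by possibly disjoint closed intervals, and forces the cofinality argument to insert singletons at jumps in $K$ --- which in turn motivates the mixture of singletons and long intervals used in the joint-refinement construction.
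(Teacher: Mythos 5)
Your proof is correct and follows essentially the same route as the paper's: the same family of covers by (possibly disjoint) closed intervals, $\chi$ a fixed multiple of $|\cC|$, and a joint refinement obtained by listing all endpoints of the given intervals and cutting along them. The extra care you take is warranted and in fact repairs the paper's one-line sketch: at a jump $p_i<p_{i+1}$ with no point of $K$ strictly between, the set $[p_i,p_{i+1}]=\{p_i,p_{i+1}\}$ produced by taking ``intervals with consecutive endpoints'' literally need not be contained in any member of the covers $\cC_j$, so singletons must be inserted exactly as you do, and the same jump phenomenon is why cofinality requires disjoint intervals and a creeping-along argument rather than shared-endpoint partitions (the paper asserts cofinality without proof). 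The only micro-step left implicit in your cofinality argument is the case $s\notin A$ with $A\cap V\cap(\leftarrow,s)=\emptyset$, which can only occur when $s$ has an immediate predecessor $s^-$; but then every element of $A$ is $\le s^-$, so $\sup A\le s^-<s$, a contradiction forcing $s\in A$ directly.
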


\begin{proof}
Indeed, let the family $\fC\sub\fcc(K)$ consists of all finite covers of $K$  by closed intervals.
Then $\fC$ is topologically cofinal. Let $\chi:\fC\to\en$ be defined as $\chi(\cC)=2|\cC|$.

If  $\cC_1,\ldots,\cC_k\in\fC$ then we define their joint refinement $\cC$ by listing all the endpoints
of intervals from $\cC_1\cup\ldots\cup \cC_k$ and taking intervals with consecutive endpoints.
Clearly,  $|\cC|\le \chi(\cC_1)+\ldots+ \chi(\cC_k)$.
\end{proof}

As we shall see soon, a product of $d$-many compact lines is a typical example of a compact space of free dimension $\le d$.

\begin{lemma}\label{fd:2}
If $K_i$ is a  compactum of free dimension $\le d_i$ for $i=1,2$ then
 \[\fd(K_1\times K_2)\le d_1+d_2.\]
\end{lemma}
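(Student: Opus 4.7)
The plan is to take as the topologically cofinal family witnessing $\fd(K_1 \times K_2) \le d_1 + d_2$ the family of product covers
\[
\fC := \{ \cC_1 \times \cC_2 : \cC_1 \in \fC_1,\ \cC_2 \in \fC_2\},
\]
where each $\fC_i$ together with $\chi_i \colon \fC_i \to \en$ witnesses $\fd(K_i) \le d_i$ via Remark~\ref{remark1}, and where $\cC_1 \times \cC_2 := \{C \times D : C \in \cC_1,\ D \in \cC_2\}$ is a finite closed cover of $K_1 \times K_2$. I set $\chi(\cC_1 \times \cC_2) := \chi_1(\cC_1) + \chi_2(\cC_2)$. (If some $d_i = 0$ then $K_i$ is finite by Lemma~\ref{fd:1.5}, and the claim reduces to the free dimension of the other factor, so I assume $d_1, d_2 \ge 1$.)

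The joint-refinement estimate is then straightforward. Given $\cC^j = \cC_1^j \times \cC_2^j \in \fC$ for $j = 1, \dots, k$, the hypothesis on each factor produces joint refinements $\cD_i \in \fcc(K_i)$ of $\cC_i^1, \dots, \cC_i^k$ with $|\cD_i| \le \bigl(\sum_j \chi_i(\cC_i^j)\bigr)^{d_i}$. Then $\cD_1 \times \cD_2 \in \fcc(K_1 \times K_2)$ jointly refines all the $\cC^j$, and the elementary inequality $A^{d_1} B^{d_2} \le (A + B)^{d_1 + d_2}$ yields
\[
|\cD_1 \times \cD_2| = |\cD_1| \cdot |\cD_2| \le \Bigl(\sum_{j} \chi(\cC^j)\Bigr)^{d_1 + d_2}.
\]

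The main obstacle is proving topological cofinality of $\fC$. Given any open cover $\cU$ of $K_1 \times K_2$, compactness provides a finite refinement by open boxes $\{V_i \times W_i\}_{i \le N}$, each contained in some member of $\cU$. The non-obvious step is converting this box cover into a cover of the form $\cV_1 \times \cV_2$ with $\cV_i$ open in $K_i$, for which I use a partition trick. Define $I(x) := \{i : x \in V_i\}$ and $J(y) := \{j : y \in W_j\}$, and let $\PP_1, \PP_2$ be the finite partitions of $K_1, K_2$ into the equivalence classes $P_S := \{x : I(x) = S\}$ and $Q_T := \{y : J(y) = T\}$. For every pair of nonempty classes $P = P_S$ and $Q = Q_T$ the set $S \cap T$ is nonempty (any $(x,y) \in P \times Q$ lies in some box), so one may pick an index $i(P, Q) \in S \cap T$. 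Setting
\[
\widetilde P := \bigcap_{Q \in \PP_2} V_{i(P, Q)}, \qquad \widetilde Q := \bigcap_{P \in \PP_1} W_{i(P, Q)},
\]
one checks that $P \subseteq \widetilde P$, $Q \subseteq \widetilde Q$, and $\widetilde P \times \widetilde Q \subseteq V_{i(P, Q)} \times W_{i(P, Q)}$ for every $P, Q$. Hence $\cV_1 := \{\widetilde P\}_{P}$ and $\cV_2 := \{\widetilde Q\}_{Q}$ are open covers of $K_1, K_2$ with $\cV_1 \times \cV_2 \prec \cU$. Applying the topological cofinality of each $\fC_i$ to $\cV_i$ then supplies $\cC_i \in \fC_i$ with $\cC_i \prec \cV_i$, whence $\cC_1 \times \cC_2 \in \fC$ refines $\cU$.
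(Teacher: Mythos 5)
Your proof is correct and takes essentially the same route as the paper: the family of product covers with factor-wise joint refinements, with your choice $\chi(\cC_1\times\cC_2)=\chi_1(\cC_1)+\chi_2(\cC_2)$ in place of the paper's $\max(\chi_1(\cC_1),\chi_2(\cC_2))$ --- both yield the bound $(\sum_j\chi(\cC^j))^{d_1+d_2}$. Your only real departure is the explicit partition argument (via the index sets $I(x)$, $J(y)$ and the choices $i(P,Q)$) showing that a finite open box cover refines to a genuine product cover $\cV_1\times\cV_2$; the paper simply asserts this step (``$\cV$ can be taken of the form $\{U\times V: U\in\cU_1, V\in\cU_2\}$''), and your verification of it is correct.
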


\begin{proof}
We may suppose $K_1$ and $K_2$ are infinite; otherwise the result is trivial.	
Following Remark \ref{remark1}, take $\fC_i\sub\fcc(K_i)$ and a function $\chi_i:\fC_i \to \en$ witnessing that $\fd(K_i)\le d_i$.

For every $\cC_i\in\fC_i$, $i=1,2$, define
 \[\cC_{1,2}=\{ C_1\times C_2: C_i\in\cC_i\},\quad \chi (\cC_{1,2})=\max (\chi_1(\cC_1), \chi_2(\cC_2)).\] 
Let $\fC\sub \fcc(K_1\times K_2)$ be the family of all covers obtained in this way; we shall check 
that $\fC$ and the function $\chi : \fC \to \en$ are as required. 

We prove first that $\fC$ is topologically cofinal. Take $\cU$ any open cover of $K_1 \times K_2$. 
By compactness, there is an open cover $\cV$ of $K_1\times K_2$ made of open rectangles such that 
$\cV \prec \cU$. Moreover, $\cV$ can be taken of the form 
$\cV=\lbrace U \times V: U \in \cU_1 , V \in \cU_2 \rbrace$ with $\cU_i$ being an open cover of $K_i$. 
Since $\fC_i$ are topologically cofinal in the respective spaces, there are 
$\cC_i \in \fC_i$ such that $\cC_i \prec \cU_i$. Thus, the cover 
$\cC = \lbrace C_1 \times C_2 : C_i \in \cC_i \rbrace \in \fC$ satisfies $\cC \prec \cV \prec \cU$, so $\fC$ is indeed topologically cofinal in the product space.

Now fix $\cC_1, \cC_2, \dots , \cC_k \in \fC$.
For every $\cC_i$ take $\cC_i^1 \in \fC_1$, $\cC_i^2 \in \fC_2$ such that 
\[ \cC_i = \lbrace C_1 \times C_2 : C_j \in \cC_i^j,~j=1,2\rbrace,\]
 for every $i \leq k$.  By definition, for $j=1$ and $j=2$, the family
 $\cC_1^j, \cC_2^j, \dots , \cC_k^j$ has  a joint refinement $\cC'_j \in \fcc(K_j)$ such that
\[\big| \cC'_j \big|\le \left(\chi_j(\cC_1^j)+\chi_j(\cC_2^j)+\ldots+ \chi_j(\cC_k^j)\right)^{d_j} \le \left(\chi(\cC_1)+\chi(\cC_2)+\ldots+ \chi(\cC_k)\right)^{d_j}.\]

Thus, if we take $\cC = \lbrace C_1 \times C_2 : C_j \in \cC_j' \rbrace \in \fC$ then $\cC$ is a joint refinement of $\cC_1, \cC_2, \dots , \cC_k $ and 
\[\big| \cC \big| \le \big| \cC_1' \big| \big| \cC_2' \big| \le \left( \chi(\cC_1)+\chi(\cC_2)+\ldots+ \chi(\cC_k)\right)^{d_1+d_2},\]
so the proof is complete.
\end{proof}

 Lemma \ref{fd:1.7} and  Lemma \ref{fd:2} yield immediately the following.

\begin{corollary}\label{fd:3}
If $L_1,L_2,\ldots, L_d$ are linearly ordered compact spaces then
\[\fd\big(L_1\times L_2\times\ldots L_d\big)\le d.\] 
\end{corollary}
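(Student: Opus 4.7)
The plan is to proceed by induction on $d$, combining Lemma \ref{fd:1.7} with Lemma \ref{fd:2} in a completely routine manner. The only small wrinkle is that Lemma \ref{fd:1.7} only covers the infinite case, so I would first reduce to a uniform bound $\fd(L_i)\le 1$ for every factor.

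First I would check the base case $d=1$. For each individual linearly ordered compactum $L$, I claim $\fd(L)\le 1$. If $L$ is infinite, this is exactly Lemma \ref{fd:1.7}, which gives $\fd(L)=1$. If instead $L$ is finite, then by Lemma \ref{fd:1.5} we have $\fd(L)=0$, and in particular $\fd(L)\le 1$. So in either case the base of the induction is established.

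For the inductive step, suppose the corollary holds for $d-1$, i.e.\ $\fd(L_1\times\cdots\times L_{d-1})\le d-1$. Write
\[ L_1\times L_2\times\cdots\times L_d=\big(L_1\times\cdots\times L_{d-1}\big)\times L_d, \]
and apply Lemma \ref{fd:2} with $K_1=L_1\times\cdots\times L_{d-1}$ (of free dimension at most $d-1$ by the inductive hypothesis) and $K_2=L_d$ (of free dimension at most $1$ by the base case). This yields
\[ \fd\big(L_1\times L_2\times\cdots\times L_d\big)\le (d-1)+1=d, \]
completing the induction.

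There is no real obstacle here; the statement is a direct corollary of the two preceding lemmas, and the only point requiring any care at all is noticing that one must cover the possibility of finite factors (where Lemma \ref{fd:1.7} does not literally apply) by invoking Lemma \ref{fd:1.5} instead.
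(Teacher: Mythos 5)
Your proof is correct and follows essentially the same route as the paper, which derives the corollary directly by combining Lemma \ref{fd:1.7} with an iterated application of Lemma \ref{fd:2}; your induction on $d$ just makes that iteration explicit. Your extra step handling finite factors via Lemma \ref{fd:1.5} is a legitimate point of care that the paper glosses over, but it does not change the argument.
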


\begin{lemma}
\label{fd:4}	
Let $K$ be a compactum of free dimension $\le d$. Suppose $L$ is a  compact space such that there is a function $F:\fcc(K)\to \fcc(L)$ preserving topologically cofinal families and such that $F(\cC) \prec F(\cC')$ and $\big|F(\cC)\big| \le \big|\cC\big|$ whenever $\cC \prec \cC'$, $\cC,\cC' \in \fC$. Then 
$\fd(L)\le d$. 
\end{lemma}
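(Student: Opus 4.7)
The plan is to transport a witness for $\fd(K)\le d$ directly through the map $F$ to obtain a witness for $\fd(L)\le d$. By Remark \ref{remark1}, fix a topologically cofinal family $\fC\sub\fcc(K)$ and a function $\chi:\fC\to\en$ such that any finite subfamily of $\fC$ admits a joint refinement $\cC\in\fcc(K)$ with $|\cC|\le (\chi(\cC_1)+\cdots+\chi(\cC_k))^d$.

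I would then set $\fC_L:=F(\fC)\sub \fcc(L)$; this is topologically cofinal in $\fcc(L)$ by the hypothesis that $F$ preserves topologically cofinal families. To define a function $\chi_L:\fC_L\to\en$ I would put
\[
\chi_L(\cD)=\min\{\,\chi(\cC):\cC\in\fC,\ F(\cC)=\cD\,\},
\]
which takes care of the possibility that $F$ is not injective on $\fC$.

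Now, given any $\cD_1,\dots,\cD_k\in \fC_L$, I would choose $\cC_i\in\fC$ with $F(\cC_i)=\cD_i$ and $\chi(\cC_i)=\chi_L(\cD_i)$, and apply the witness for $K$ to obtain a joint refinement $\cC\in\fcc(K)$ of $\cC_1,\dots,\cC_k$ with $|\cC|\le (\chi(\cC_1)+\cdots+\chi(\cC_k))^d$. The candidate joint refinement in $L$ is then $F(\cC)\in\fcc(L)$. Since $\cC\prec \cC_i$ for every $i$, the monotonicity of $F$ gives $F(\cC)\prec F(\cC_i)=\cD_i$, so $F(\cC)$ is a joint refinement of $\cD_1,\dots,\cD_k$ in $\fcc(L)$. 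The cardinality estimate $|F(\cC)|\le |\cC|$ then yields
\[
|F(\cC)|\le \big(\chi_L(\cD_1)+\cdots+\chi_L(\cD_k)\big)^d,
\]
which is exactly the inequality required by Remark \ref{remark1} for $L$.

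I do not anticipate a substantial obstacle: once the three properties of $F$ (monotonicity w.r.t.\ $\prec$, non-expansion of cardinality, preservation of topological cofinality) are in hand, the argument is essentially bookkeeping. The only mildly delicate point is ensuring $\chi_L$ is well defined in the absence of injectivity of $F$, which the $\min$-definition handles, and checking that the monotonicity hypothesis is applied to pairs of the form $(\cC,\cC_i)$ where $\cC$ is the joint refinement supplied by the $K$-witness rather than a member of $\fC$ itself; the natural reading of the hypothesis (that $F$ preserves $\prec$ on all of $\fcc(K)$) makes this step immediate.
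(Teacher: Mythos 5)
Your proposal is correct and is essentially the paper's own proof: the same image family $F(\fC)$, the same $\min$-definition of the transported function $\chi_L$ (the paper's $\chi'$), and the same use of monotonicity and the bound $\big|F(\cC)\big|\le\big|\cC\big|$ applied to the joint refinement $\cC\in\fcc(K)$, including your (correct) reading that $F$ preserves $\prec$ on all of $\fcc(K)$. The only difference is that the paper first disposes of the case of finite $K$, where $d=0$ and Remark \ref{remark1} does not apply, by observing that preservation of topologically cofinal families forces $L$ to be finite as well; your argument as written covers $d\ge 1$, and the $d=0$ case follows by the identical bookkeeping carried out with the constant $M$ of Definition \ref{fd:1}.
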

\begin{proof}
Note that a compact space $K$ is finite if and only if there is a topologically cofinal family $\fC \sub \fcc(K)$ consisting of just one cover. Therefore, since $F$ preserves topologically cofinal families, if $K$ is finite then $L$ is also finite and both have free dimension $0$.

Suppose now $K$ and $L$ infinite. Take $\fC\sub\fcc(K)$ and $\chi:\fC \to \en$ witnessing that $\fd(K)\le d$. Set 
\[\fC' = F(\fC) = \lbrace F(\cC): \cC \in \fC \rbrace \sub \fcc(L).\]
Since $\fC$ is topologically cofinal, it follows from the hypothesis on $F$ that $\fC'$ is also topologically cofinal.
Let $\chi': \fC' \to \en$ be the function defined as $\chi'(\cC')=\min \lbrace \chi(\cC): \cC \in \fC, ~F(\cC)=\cC' \rbrace$ for every $\cC' \in \fC'$.

Take $\cC_1', \cC_2', \dots, \cC_k' \in \fC'$. Set $\cC_1, \cC_2, \dots, \cC_k \in \fC$ such that $F(\cC_i)=\cC_i'$ and $\chi'(\cC_i')=\chi(\cC_i)$ for every $i\le k$.
By definition, there is a joint refinement $\cC \in \fC$  of $\cC_1, \cC_2, \ldots, \cC_k$ such that 
\[\big| \cC\big|\le \big(\chi(\cC_1)+\chi(\cC_2)+\ldots+ \chi(\cC_k)\big)^d.\]
By the monotonicity of $F$, $F(\cC)$ is a joint refinement of $\cC_1', \cC_2', \dots, \cC_k'$ and
\[\big| F(\cC)\big|\le \big| \cC \big| \le  \big(\chi(\cC_1)+\chi(\cC_2)+\ldots+ \chi(\cC_k)\big)^d=\big(\chi'(\cC_1')+\chi'(\cC_2')+\ldots+ \chi'(\cC_k')\big)^d .\]
\end{proof}

\begin{theorem}\label{fd:5}
The class of compacta $K$ with $\fd(K)\le d$ is stable under taking closed subspaces
and continuous images.
\end{theorem}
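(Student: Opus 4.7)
The plan is to apply Lemma~\ref{fd:4} in each of the two cases, constructing an appropriate map $F\colon\fcc(K)\to\fcc(L)$ and checking the three hypotheses of that lemma: monotonicity under $\prec$, the cardinality bound $|F(\cC)|\le|\cC|$, and preservation of topologically cofinal families. The case $d=0$ reduces to Lemma~\ref{fd:1.5} since closed subspaces and continuous images of finite spaces are finite, so I focus on $d\ge 1$.

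For a continuous surjection $f\colon K\to L$, I would define
\[F(\cC)=\{f(C):C\in\cC\}.\]
Each $f(C)$ is closed by compactness, so $F(\cC)\in\fcc(L)$; the cardinality bound is immediate, and if $\cC\prec\cC'$ then for each $C\in\cC$ there is $C'\in\cC'$ with $C\sub C'$, hence $f(C)\sub f(C')$, giving monotonicity. For topological cofinality, an open cover $\cU$ of $L$ pulls back to the open cover $\{f^{-1}(U):U\in\cU\}$ of $K$, and any $\cC$ in the topologically cofinal family of $K$ refining this pullback satisfies $F(\cC)\prec\cU$.

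For a closed subspace $L\sub K$, I would define
\[F(\cC)=\{C\cap L:C\in\cC,\ C\cap L\neq\emptyset\}.\]
Monotonicity and $|F(\cC)|\le|\cC|$ are clear. The point that needs a little care is topological cofinality: given an open cover $\cU$ of $L$, I would choose for each $U\in\cU$ an open $V_U\sub K$ with $V_U\cap L=U$ and then consider the enlarged cover $\{V_U:U\in\cU\}\cup\{K\setminus L\}$ of $K$. Any member of a topologically cofinal family in $K$ that refines this cover restricts to a refinement of $\cU$, since every piece that meets $L$ is forced inside some $V_U$.

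No serious obstacle is anticipated; the only mild subtlety is the adjunction of $K\setminus L$ when verifying topological cofinality in the subspace case. Once these two maps $F$ are in place, Lemma~\ref{fd:4} delivers $\fd(L)\le d$ in either setting, completing the proof.
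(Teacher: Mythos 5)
Your proposal is correct and follows essentially the same route as the paper: both cases are handled by applying Lemma~\ref{fd:4} to the maps $F(\cC)=\{g[C]:C\in\cC\}$ and $F(\cC)=\{C\cap L:C\in\cC\}$, with cofinality checked by pulling open covers of $L$ back (or extending them) to open covers of $K$. Your explicit adjunction of $K\setminus L$ in the subspace case is a slightly more careful rendering of the paper's step ``take an open cover $\cU'$ of $K$ such that $\cU=\{U\cap L:U\in\cU'\}$'', which tacitly requires exactly that enlargement, so the two arguments coincide in substance.
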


\begin{proof}
If $g:K\to L$ is a continuous surjection then we apply the previous lemma with 
\[F: \fcc(K) \to \fcc(L), \mbox{ where } F(\cC)=\{g[C]: C\in\cC\} \mbox{  for every } \cC\in\fcc(K).\]
Notice that if $\cU$ is any open cover of $L$, then 
$\cU' = \lbrace  g^{-1}(U): U \in \cU \rbrace$ is an open cover of $K$ and $F(\cC) \prec \cU$ whenever $\cC \prec \cU'$. Thus, $F$ preserves topologically cofinal families and it is immediate that $F$ satisfies the rest of the hypotheses of Lemma \ref{fd:4}.

If $L \subseteq K$ is a closed subspace then the function $F: \fcc(K) \to \fcc(L)$ defined as $F(\cC)=\{C\cap L: C\in\cC\}$ for every $\cC\in\fcc(K)$ also satisfies the hypotheses of Lemma \ref{fd:4}. In particular, if $\cU$ is an open cover of $L$, then we can take an open cover $\cU'$ of $K$ such that $\cU=\lbrace  U\cap L: U \in \cU' \rbrace$ and in this case, $F(\cC) \prec \cU$ whenever $\cC \prec \cU'$.  
\end{proof}

\begin{corollary}
If $K$ is a metric compactum then $\fd(K)\le 1$.
\end{corollary}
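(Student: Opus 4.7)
The plan is to reduce the statement to a combination of Lemma \ref{fd:1.7} and Theorem \ref{fd:5}, using a classical representation theorem for compact metric spaces.

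First I would dispose of the trivial case: if $K$ is finite, then $\fd(K) = 0 \le 1$ by Lemma \ref{fd:1.5}, so there is nothing to prove. Thus I may assume that $K$ is an infinite metric compactum.

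The main observation is that every nonempty compact metric space is a continuous image of the Cantor set $2^{\en}$ (the Hausdorff--Alexandroff theorem). On the other hand, the Cantor set is homeomorphic to a closed subspace of the unit interval $[0,1]$, which is an infinite linearly ordered compact space and therefore has $\fd([0,1]) = 1$ by Lemma \ref{fd:1.7}.

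Applying Theorem \ref{fd:5}, first to the closed embedding of the Cantor set into $[0,1]$ and then to the continuous surjection from the Cantor set onto $K$, we conclude that $\fd(K)\le 1$, which is the desired bound.

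There is no real obstacle here; everything reduces to known facts established earlier in the section together with the standard fact that every compact metric space is a continuous image of the Cantor set. The only minor point to observe is that a finite space has free dimension $0$ rather than $1$, which is consistent with the inequality $\fd(K)\le 1$.
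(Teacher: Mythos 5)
Your proof is correct and follows essentially the same route as the paper: the paper also deduces the corollary from the Hausdorff--Alexandroff theorem together with Theorem \ref{fd:5}, starting from $\fd(2^\omega)=1$. The only cosmetic difference is that you obtain the bound for the Cantor set by embedding it as a closed subspace of $[0,1]$, whereas one can note directly that $2^\omega$ is itself an infinite linearly ordered compactum, so Lemma \ref{fd:1.7} applies to it without the detour (and your separate finite case is likewise harmless but unnecessary, since finite nonempty compacta are also continuous images of the Cantor set).
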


\begin{proof}
We have $\fd(2^\omega)=1$; therefore  we conclude that $\fd(K)\le 1$ for every metric compactum $K$,
using Theorem \ref{fd:5} and the fact that $K$ is a continuous image of 
the Cantor set $2^\omega$.
\end{proof}

\section{Free dimension of products}\label{fdp}

The main result of the sections reads as follows.

\begin{theorem}\label{fdp:1}
Consider a space $K$ of the form
\[ K=K_1\times K_2\times\ldots\times K_d\times K_{d+1}, \]
where $d\ge 1$,  $K_1,\ldots, K_d$ are nonmetrizable compacta, while  $K_{d+1}$ is
an infinite compact space. Then $\fd(K)\ge d+1$.
\end{theorem}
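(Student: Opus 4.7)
The plan is to argue by contradiction via induction on $d \ge 1$. Suppose $\fd(K)\le d$, where $K=K_1\times\cdots\times K_{d+1}$; fix a topologically cofinal family $\fC\sub\fcc(K)$ and a function $\chi:\fC\to\en$ as in Remark~\ref{remark1}. First I reduce to $K_{d+1}=\omega+1$: since $K_{d+1}$ is infinite and compact Hausdorff, it contains a nontrivial convergent sequence whose closure is homeomorphic to $\omega+1=\{x_n:n<\omega\}\cup\{x_\infty\}$, and by Theorem~\ref{fd:5} I may work inside the closed subspace $K_1\times\cdots\times K_d\times(\omega+1)$. Write $L=K_1\times\cdots\times K_d$.

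The core construction slices $K$ via the convergent sequence. For each $n\ge 1$, the open cover $\cU_n=\{\{x_0\},\ldots,\{x_{n-1}\},\{x_n,\ldots,x_\infty\}\}$ of $\omega+1$ lifts to $\widehat{\cU}_n$, and topological cofinality of $\fC$ produces $\cC_n\in\fC$ with $\cC_n\prec\widehat{\cU}_n$; every piece of $\cC_n$ lies in a single horizontal slab $L\times\{x_i\}$ with $i<n$ or in the tail $L\times\{x_n,\ldots,x_\infty\}$. For each open cover $\cV$ of $L$, pick $\cD_\cV\in\fC$ refining the lift $\widehat{\cV}=\{V\times(\omega+1):V\in\cV\}$. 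Given finitely many covers $\cV^{(1)},\ldots,\cV^{(k)}$ of $L$, the joint refinement $\cE\in\fcc(K)$ of $\cC_n,\cD_{\cV^{(1)}},\ldots,\cD_{\cV^{(k)}}$ satisfies
\[
|\cE|\;\le\;\bigl(\chi(\cC_n)+\textstyle\sum_j\chi(\cD_{\cV^{(j)}})\bigr)^d.
\]
Restricting to slice $i<n$ gives a closed cover $\cE^i$ of $L$ refining every $\cV^{(j)}$; disjointness of slices yields $\sum_{i<n}|\cE^i|\le|\cE|$, so by pigeonhole some $i^*<n$ satisfies $|\cE^{i^*}|\le n^{-1}\bigl(\chi(\cC_n)+\sum_j\chi(\cD_{\cV^{(j)}})\bigr)^d$.

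The next step is to use this slice estimate to construct a witness of $\fd(L)\le d-1$. Defining $\fC'$ as the family of slice covers $\cE^{i^*}$ (as $\cV$ and $n$ vary) and $\chi'$ proportional to $\chi(\cC_n)+\chi(\cD_\cV)$, while choosing $n$ as a function of the $\chi$-values that absorbs the $n^{-1}$ factor, should yield a topologically cofinal family on $L$ with joint-refinement bound of polynomial degree $d-1$. By the inductive hypothesis applied to $L$ (treating $K_d$ as the infinite factor and $K_1,\ldots,K_{d-1}$ as nonmetrizable), one has $\fd(L)\ge d$, contradicting the bound just obtained. The base case $d=1$ is handled in the same spirit: the degree reduction would yield $\fd(K_1)\le 0$, forcing $K_1$ finite and contradicting its nonmetrizability. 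There nonmetrizability enters via the fact that $K_1$ admits no countable topologically cofinal family of closed covers, which, after pigeonholing the $\chi$-values of $\cD_{\cV_\alpha}$ for an uncountable family $(\cV_\alpha)_{\alpha<\omega_1}$ of open covers of $K_1$, forces the slice estimate above to fail.

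The main obstacle is executing the degree-reduction step rigorously. The slice estimate produces a single small-size cover refining the given $\cV^{(j)}$'s, whereas the $\fd\le d-1$ condition demands polynomial control over arbitrary finite joint refinements in $L$. One must calibrate $n$ carefully as a function of $\sum_j\chi(\cD_{\cV^{(j)}})$ so that the resulting bound becomes of the form $(\sum\chi')^{d-1}$, and verify that $\fC'$ is genuinely closed under the abstract joint-refinement operation of Definition~\ref{fd:1}. By contrast, establishing topological cofinality of $\fC'$ on $L$ is routine: any open cover $\cV$ of $L$ lifts to $\widehat{\cV}$ on $K$, which is refined by $\cD_\cV\in\fC$, and the slicing then delivers an element of $\fC'$ refining $\cV$.
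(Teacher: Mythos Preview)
Your proposal contains two genuine gaps, one local and one structural.

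\medskip

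\textbf{The reduction to $\omega+1$ is invalid.} You assert that every infinite compact Hausdorff space contains a nontrivial convergent sequence. This is false: $\beta\en$ is a standard counterexample. What an infinite compactum always has is an infinite pairwise disjoint family of nonempty open sets, and the paper's proof uses exactly that (via pairwise disjoint norm-one functions $h_i$ on $K_{d+1}$). Your slicing scheme could perhaps be rewritten using such a family rather than isolated points, but as stated the reduction step is wrong.

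\medskip

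\textbf{The degree-reduction step cannot be made to work at the level of generality you use.} Your inductive step, stripped of details, is the implication
\[
\fd\bigl(L\times(\omega+1)\bigr)\le d \ \Longrightarrow\ \fd(L)\le d-1,
\]
after which you invoke the inductive hypothesis $\fd(L)\ge d$ for $L=K_1\times\cdots\times K_d$. But this implication is simply false: take $L=2^\omega$ and $d=1$. Then $L\times(\omega+1)$ is an infinite metrizable compactum, so $\fd(L\times(\omega+1))=1\le 1$, yet $\fd(2^\omega)=1\not\le 0$. Thus no amount of calibrating $n$ as a function of $\sum_j\chi(\cD_{\cV^{(j)}})$ will rescue the argument. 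Concretely, your slice bound
\[
|\cE^{i^*}|\le n^{-1}\Bigl(\chi(\cC_n)+\sum_j\chi(\cD_{\cV^{(j)}})\Bigr)^{d}
\]
involves $\chi(\cC_n)$, over which you have no control; it may grow with $n$ fast enough that the factor $n^{-1}$ never lowers the exponent. The obstacle you flag at the end is therefore not a technicality but a genuine obstruction: the inductive hypothesis $\fd(L)\ge d$ alone is too weak, because it forgets that the individual factors $K_1,\ldots,K_d$ are nonmetrizable.

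\medskip

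\textbf{How the paper proceeds instead.} The paper does not induct on $d$. It exploits nonmetrizability of each $K_i$ simultaneously, via Lemma~\ref{fdp:2}, to produce on every $K_i$ ($i\le d$) an uncountable family $\cF^i\subset C(K_i)$ such that any $n$ functions from $\cF^i$ separate $n+1$ suitably chosen points by at least $1/2$. After pigeonholing the $\chi$-values of the associated good covers down to a common bound $m$, and choosing $p>(md)^d$ disjoint norm-one functions on $K_{d+1}$, one obtains a set $D\subset K$ of size $(n+1)^d(p+1)$ whose points are pairwise $1/2$-separated by the chosen functions, while the free-dimension hypothesis produces a good cover of size at most $(ndm+pm_1)^d$. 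For $n$ large this is a contradiction because $p+1>(md)^d$. The point is that the nonmetrizability of \emph{every} factor is used directly to manufacture the $(n+1)^d$ growth, not hidden behind an inductive call.
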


We first prove the following auxiliary result.

\begin{lemma}\label{fdp:2}
Let $K$ be a nonmetrizable compactum.
There is an uncountable family $\cF$  of continuous functions $K\to [0,1]$ such that
for every uncountable $\cF'\sub\cF$ and infinite $\cF''\sub\cF'$, for every $n$
there are   functions $f_1,\ldots, f_n\in\cF''$ and  points 
$x_1,\ldots, x_{n+1}\in K$ such that for every $1\le j< j'\le n+1$ there is $k\le n$ such that
$|f_k(x_j)-f_k(x_{j'})| \ge 1/2$.
\end{lemma}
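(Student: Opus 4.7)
The plan is to build $\cF$ as a transfinite sequence $\{f_\alpha : \alpha < \omega_1\}$ of continuous functions $K\to[0,1]$, each certified by a pair of points $(x_\alpha, y_\alpha)$ in the spirit of a free sequence. The nonmetrizability of $K$ is invoked at each stage: if the countable family $\{f_\beta\}_{\beta<\alpha}$ already separated the points of $K$, then the diagonal map $K\to[0,1]^\alpha$ would be a topological embedding of $K$ into a metrizable space, forcing $K$ to be metrizable. Consequently, there are distinct $x_\alpha, y_\alpha \in K$ with $f_\beta(x_\alpha)=f_\beta(y_\alpha)$ for every $\beta<\alpha$, and Urysohn's lemma supplies a continuous $f_\alpha:K\to[0,1]$ with $f_\alpha(x_\alpha)=0$ and $f_\alpha(y_\alpha)=1$. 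For $\beta<\alpha$ write $c_{\beta,\alpha}$ for the common value $f_\beta(x_\alpha)=f_\beta(y_\alpha)\in[0,1]$.

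Note that the double quantifier ``for every uncountable $\cF'\sub\cF$ and infinite $\cF''\sub\cF'$'' is equivalent to ``for every infinite $\cF''\sub\cF$'' (take $\cF'=\cF$), so it suffices to fix $n$ and an arbitrary infinite $\cF''\sub\cF$ and produce the required $n$ functions and $n+1$ points. Enumerate $\cF''=\{f_{\beta_i}\}_{i\in\omega}$ with $\beta_1<\beta_2<\dots$ in $\omega_1$, and $2$-color each pair $\{i,j\}$ with $i<j$ according to whether $c_{\beta_i,\beta_j}\le 1/2$ or $c_{\beta_i,\beta_j}>1/2$. The infinite Ramsey theorem supplies an infinite monochromatic $I\sub\omega$.

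Pick any $i_1<i_2<\dots<i_{n+1}$ in $I$, set $f_k:=f_{\beta_{i_k}}$ for $k=1,\dots,n$, and let $x_j:=y_{\beta_{i_j}}$ for $j=1,\dots,n+1$ if $I$ is of ``$\le 1/2$'' type, and $x_j:=x_{\beta_{i_j}}$ otherwise. For any $1\le j<j'\le n+1$, choose $k=j\le n$: by the free-sequence property, $f_k(x_{j'})=c_{\beta_{i_j},\beta_{i_{j'}}}$, while $f_k(x_j)\in\{0,1\}$ is fixed by the choice of $x_j$. In the first case $|1-c_{\beta_{i_j},\beta_{i_{j'}}}|\ge 1/2$, and in the second $|0-c_{\beta_{i_j},\beta_{i_{j'}}}|>1/2$, yielding the desired separation in either case.

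The heart of the argument is the interplay between the free-sequence construction (which makes the ``above-diagonal'' values $c_{\beta,\alpha}$ scalars that are insensitive to the choice between $x_\alpha$ and $y_\alpha$) and the Ramsey pigeonhole (which makes these scalars uniformly lie on one side of $1/2$ within an infinite subfamily). The one mildly delicate point is that $n$ functions already separate $n+1$ points; this is explained by the $(n+1)$-th point playing a passive role, since it is distinguished from every earlier $x_j$ already via $f_j$, not via any function attached to the largest index.
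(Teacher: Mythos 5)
Your proof is correct and takes essentially the same route as the paper's: the same transfinite construction of functions $f_\alpha$ with twin points $x_\alpha, y_\alpha$ unseparated by the earlier functions, followed by the same Ramsey two-coloring of the above-diagonal values $c_{\beta,\alpha}$ relative to $1/2$ (your quantifier simplification to arbitrary infinite $\cF''\sub\cF$ is also valid). The only cosmetic difference is that you select $n+1$ indices and let the last point play a passive role, while the paper selects $n$ indices and uses both twins $x_{\alpha_n}^0, x_{\alpha_n}^1$ at the top index, separated by $f_{\alpha_n}$ itself.
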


\begin{proof}
	Since $K$ is not metrizable, there is a family $\cF=\{f_\alpha:\alpha<\omega_1\}$ of continuous functions
	and points $x_\alpha^0, x_\alpha^1\in K$ such that
	
	\begin{enumerate}[(a)]
		\item $f_\alpha(x_\alpha^i)=i$ for every $\alpha<\omega_1$ and $i\in\{0,1\}$;
		\item $f_\beta(x_\alpha^0)= f_\beta(x_\alpha^1)$ for every $\beta<\alpha<\omega_1$.
	\end{enumerate}
	
	The inductive construction proving the above claim is straightforward: 
	Given a countable family $\cF_0=\{f_\xi:\xi< \alpha\}$, there are
	two points $x_\alpha^0, x_\alpha^1$ that are not separable by $\cF_0$ (as $K$ is not metrizable). We
	then choose $f_\alpha$ satisfying (a).

	It remains to  check that the family $\cF$ satisfies the assertion. Consider any uncountable $J\sub\omega_1$. Set 
	\[c_{\beta,\alpha}=f_\beta(x_\alpha^0)= f_\beta(x_\alpha^1),\]  
for every $\beta<\alpha<\omega_1$ and write $I_0=[0, 1/2]$, $I_1=(1/2,1]$. 

Take any infinite $N\sub J$. For $\beta,\alpha\in N$, $\beta<\alpha$,
say that $\{\beta, \alpha\}$ gets color $0$ if $c_{\beta,\alpha}\in I_0$, and
gets color $1$ otherwise. By the Ramsey theorem,
there is an infinite  homogeneous set $N_1\sub N$. Let us suppose that $N_1$
is $0-$homogeneous, the other case is analogous.

Fix any natural number $n$. We claim that the functions $f_{\alpha_1}, \dots, f_{\alpha_n}$,
where $\alpha_i\in N_1$, $\alpha_{1}<\ldots<\alpha_n$,    and the points 
$x_{\alpha_1}^1, x_{\alpha_2}^1,\dots , x_{\alpha_n}^1, x_{\alpha_n}^0$ 
satistify the assertion of the lemma.
	
	Notice that 
\[|f_{\alpha_i}(x_{\alpha_i}^1)-f_{\alpha_i}(x_{\alpha_j}^\epsilon)|=|1-c_{\alpha_i, \alpha_{j}}| \geq 1/2,\] 
whenever $i<j\leq n$ and $\epsilon \in \lbrace 0,1 \rbrace$. Since $|f_{\alpha_n}(x_{\alpha_n}^1)-f_{\alpha_n}(x_{\alpha_n}^0)|=1$, the proof is complete.
\end{proof}

\begin{proof}(of Theorem \ref{fdp:1})
We argue by contradiction: suppose that 
$\fC\sub \fcc(K)$ is a topologically cofinal family of finite closed covers witnessing, together
with a function $\chi:\fC\to\en$,  that $\fd(K)\le d$.

For simplicity, say that a closed finite cover $\cC$  of $K$ is {\em good} for a function
$f\in C(K)$ if the oscillation of $f$ is $\le 1/3$ on every $C\in\cC$. Note that, since
$\fC$ is topologically cofinal, for every $f\in C(K)$ there is $\cC\in\fC$ which is
good for $f$.
For $i\le d+1$ we write $\pi_i:K\to K_i$ for the projection.
In particular,  for any $i\le d+1$ and $g\in C(K_i)$, there is $\cC\in\fC$ which is good
for $g\circ\pi_i$.

For every $i\le d$ take an uncountable  family $\cF^i\sub C(K_i)$,  
as in Lemma \ref{fdp:2}. Since the assertion of \ref{fdp:2} allows us to pass
to uncountable subfamilies, we can and do assume that there is a natural number $m$
such that for every $i\le d$ and $f\in\cF^i$ there exists a partition $\cC\in \fC$ 
which is good for $f\circ\pi_i$ and satisfies $\chi(\cC)\le m$.

Since the space $K_{d+1}$ is infinite we can choose a sequence 
$h_i: K_{d+1}\to [0,1]$ of pairwise disjoint norm-one continuous functions.
Fix any $p\ge (m \cdot d)^d$.

Let $m_1$ be a natural number such that for every $i\le p$, the  
function $h_i\circ\pi_{d+1}$ admits a good cover $\cC\in\fC$ satisfying $\chi(\C)\le m_1$.

We now consider some large number $n$; we shall specify it later. 
For every $i\le d$ take $\{g^i_1,\ldots, g^i_n\}\sub \cF^i$ satisfying the assertion
of Lemma \ref{fdp:2}. Put 
\[\cG^i=\{g^i_j\circ\pi_i: j\le n\},  \quad \cH=\{h_1\circ\pi_{d+1},\ldots, h_p\circ\pi_{d+1}\},\]
and write $\cG=\cG^1\cup\ldots\cup \cG^d$.

For every function $f\in \cG$ there is a cover $\cC_f\in\fC$ such that $\chi(\cC_f)\le m$,
which is  good for $f$.
Likewise, for $f\in\cH$ there is a cover $\cC_f\in\fC$ such that $\chi(\cC_f)\le m_1$, which is  good for $f$.
By the definition of the free dimension, there is a finite closed cover $\cC$ of $K$, which refines all $\cC_f$ for $f\in \cG\cup \cH$, and satisfies
\[ |\cC|\le \left( \sum_{f\in \cG}\chi(\cC_f) +\sum_{f\in \cH} \chi(\cC_f)\right)^d\le  \left(n\cdot d \cdot m+ p\cdot m_1\right)^d.\]
Note that $\cC$ is good for every function from $\cG\cup\cH$. To summarize,
\[ (*) \mbox{ there is a cover of } K \mbox{ of size } (n\cdot d \cdot m+ p\cdot m_1)^d
\mbox{ which is good for every } f\in \cG\cup\cH.\]

On the other hand, for every $i\le d$ there are points $x^i_1,\ldots, x^i_{n+1}\in K_i$ as in 
Lemma \ref{fdp:2} (for the family $\{g^i_1,\ldots, g^i_n\}$). In the space
$K_{d+1}$ choose points $y_i$, $i\le p+1$ so that $h_i(y_i)=1$ for $i\le p$ and
$h_i(y_{p+1})=0$ for $i\le p$. 

Consider now the set $D\sub K$, where
\[D=\{x^1_1,\ldots, x^1_{n+1} \}\times \ldots\times \{x^d_1,\ldots, x^d_{n+1}\}\times \{y_1,\ldots, y_{p+1}\}.\] 
Note that $|D|=(n+1)^d(p+1)$ and 
\[ (**) \mbox{ for } d,d'\in D, \mbox{ if } d\neq d' \mbox{ then there is } f\in \cG\cup\cH
\mbox{ such that } |f(d)-f(d')|\ge 1/2.\]
Indeed, if, for instance, $d,d'\in D$ and $d_1\neq d_1'$ then we use
the property granted by Lemma \ref{fdp:2} to find a function $f^1_i$ such that
$|f^1_i(d_1)-f^1_i(d_1')|\ge 1/2$; then 
\[ |f^1_i\circ\pi_1(d)-f^1_i\circ\pi_1 (d')|\ge 1/2.\]

Clearly, (**) implies that there is no cover of $K$ of size $< (n+1)^d(p+1)$ which is good for every function from  $\cG\cup\cH$.

Finally, (*) and (**) yield a contradiction, whenever $n$ satisfies
\[ (n\cdot d \cdot m+ p\cdot m_1)^d< (n+1)^d(p+1),\]
which eventually holds as $p+1>  (m \cdot d) ^d$.
\end{proof}

\begin{corollary}\label{fdp:3}
The conjecture \ref{i:1} holds true.
\end{corollary}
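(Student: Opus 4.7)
The plan is to derive Conjecture \ref{i:1} as an essentially immediate consequence of the three main ingredients already established: Corollary \ref{fd:3} bounding the free dimension of products of linearly ordered compacta, Theorem \ref{fd:5} on continuous-image stability of free dimension, and Theorem \ref{fdp:1} giving the lower bound for products with nonmetrizable factors.

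First I would exploit the reduction already carried out in the introduction: it suffices to handle the case $s=1$. Given compacta $K_1,\ldots,K_{d+s}$ with at most $s$ metrizable among them, one may assume $K_1,\ldots,K_d$ are nonmetrizable and group the remaining factors into a single infinite compactum $K_{d+1}':=K_{d+1}\times\cdots\times K_{d+s}$. A continuous surjection from a product of $d$ linearly ordered compacta onto $K_1\times\cdots\times K_{d+s}$ is then the same as such a surjection onto $K_1\times\cdots\times K_d\times K_{d+1}'$, with the last factor infinite. Thus the conjecture reduces to showing that no such surjection exists in this setting.

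Next I would argue by contradiction. Suppose $L_1,\ldots,L_d$ are linearly ordered compact spaces and $g:L_1\times\cdots\times L_d\to K_1\times\cdots\times K_d\times K_{d+1}'$ is a continuous surjection, with $K_1,\ldots,K_d$ nonmetrizable and $K_{d+1}'$ infinite. By Corollary \ref{fd:3},
\[\fd(L_1\times\cdots\times L_d)\le d.\]
Since free dimension is preserved under continuous images (Theorem \ref{fd:5}), the codomain also has free dimension at most $d$:
\[\fd(K_1\times\cdots\times K_d\times K_{d+1}')\le d.\]
On the other hand, Theorem \ref{fdp:1} applies directly to this product and yields
\[\fd(K_1\times\cdots\times K_d\times K_{d+1}')\ge d+1,\]
an immediate contradiction.

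There is no real obstacle here — the entire content has been packaged into the earlier results, and the corollary is essentially bookkeeping. The only point deserving a line of explanation is that the reduction to $s=1$ was already justified in the introduction, so one need not redo it; the proof is just the chain of inequalities $\fd\le d$ (from the hypothesis via \ref{fd:3} and \ref{fd:5}) versus $\fd\ge d+1$ (from \ref{fdp:1}), and the contradiction closes the argument.
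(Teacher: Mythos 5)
Your proposal is correct and coincides with the paper's own proof: the same reduction to $s=1$ from the introduction, followed by the contradiction between $\fd\le d$ (Corollary \ref{fd:3} together with the continuous-image stability of Theorem \ref{fd:5}) and $\fd\ge d+1$ (Theorem \ref{fdp:1}). No gaps; the extra detail you give in spelling out the reduction is exactly what the paper delegates to the introduction.
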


\begin{proof}
As we explained in the introduction,  it is enough to check that
there is no continuous surjection
\[ L=L_1\times L_2\times\ldots L_d\to K=K_1\times\ldots K_d\times K_{d+1},\]
whenever $L_1,\ldots, L_d$ are compact lines, $K_1,\ldots K_d$ are nonmetrizable compacta
and $K_{d+1}$ is an infinite compact space.

This follows from  Lemma \ref{fd:5}, as $\fd(L)\le d$ by Lemma \ref{fd:3} and
$\fd(K)\ge d+1$ by Theorem \ref{fdp:1}.
\end{proof}

\section{Free dimension and Boolean algebras}\label{ba}

The purpose of this final  section is to discuss some classes of  Boolean algebras whose Stone spaces have free dimension $\leq d$.

Let $\fA$ be a Boolean algebra; we denote by $K_\fA$ its Stone space of all ultrafilters on $\fA$.
Given $a\in\fA$, $\widehat{a}=\{x\in K_\fA: a\in x\}$ is the corresponding clopen subset of $K_\fA$.
 
For  every finite set $F\sub \fA$, we write $\At(F)$ for the family of atoms of the Boolean subalgebra generated by $F$. 
Note that every family $\At(F)$ defines  a finite closed cover 
\[ \cC_F=\{\widehat{a}: a\in\At(F)\},\]
of $K_\fA$. Moreover, if $\Gamma\sub\fA$ generates $\fA$ then 
\[ \fC = \{ \cC_F: F\sub \Gamma \mbox{ finite}\} \sub \fcc(K_\fA),\]
 is a topologically cofinal family of covers of  $K_\fA$. 
 
 A natural class of Boolean algebras whose Stone spaces 
 have free dimension $\le d$
  is described by the following result.

\begin{theorem} \label{ba:1}
Let $d$ be a natural number and $M\ge0$. Suppose that a Boolean algebra $\fA$ is generated by a set $\Gamma$ such that every finite set $F \subseteq \Gamma$ satisfies 
$ \big| \At(F)\big|  \leq M |F|^d$.

 Then $\fd(K_\fA)\le d$.	
\end{theorem}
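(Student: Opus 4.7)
The plan is to use the natural topologically cofinal family $\fC = \{\cC_F : F \sub \Gamma \text{ finite}\}$ that is already exhibited in the paragraph preceding the statement. For the witnessing function I would set
\[
\chi(\cC) = \min\{\,|F| : F \sub \Gamma \text{ finite and } \cC_F = \cC\,\},
\]
so that $\chi$ is an honest function on $\fC$ despite the fact that distinct finite subsets of $\Gamma$ may generate the same subalgebra and hence the same cover $\cC_F$.

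Given $\cC_1, \ldots, \cC_k \in \fC$, I would pick representatives $F_i \sub \Gamma$ realizing $|F_i| = \chi(\cC_i)$ and put $F = F_1 \cup \ldots \cup F_k$. The cover $\cC_F$ then serves as the required joint refinement: since $F_i \sub F$, the subalgebra generated by $F_i$ sits inside the one generated by $F$, so each atom of $\At(F)$ is contained in a unique atom of $\At(F_i)$; passing to Stone spaces via $a \mapsto \widehat{a}$ translates this to $\cC_F \prec \cC_{F_i}$ for every $i \le k$. The hypothesis on $\Gamma$ now gives directly
\[
|\cC_F| = |\At(F)| \le M\,|F|^d \le M\bigl(|F_1|+\cdots+|F_k|\bigr)^d = M\bigl(\chi(\cC_1)+\cdots+\chi(\cC_k)\bigr)^d,
\]
which is exactly the inequality required by Definition \ref{fd:1}.

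The borderline case $d=0$ needs only a brief separate argument: the hypothesis then becomes $|\At(F)| \le M$ for every finite $F \sub \Gamma$, which forces $\fA$ to be finite (every element of $\fA$ lies in the subalgebra generated by some finite $F \sub \Gamma$, and such a subalgebra contains at most $2^M$ elements), so $K_\fA$ is finite and $\fd(K_\fA) = 0$ by Lemma \ref{fd:1.5}. I do not anticipate any serious obstacle; the only delicate point is making sure $\chi$ is well-defined on covers rather than on their generating sets, and the minimum-taking definition above handles that cleanly, while the constant $M$ in Definition \ref{fd:1} absorbs the multiplicative factor without any rescaling.
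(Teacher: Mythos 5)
Your proof is correct and takes essentially the same route as the paper: the same topologically cofinal family $\fC=\{\cC_F: F\sub\Gamma\ \mbox{finite}\}$, the same joint refinement $\cC_{F_1\cup\dots\cup F_k}$, and the same final appeal to the hypothesis $\big|\At(F)\big|\le M|F|^d$. The only difference is bookkeeping: where you set $\chi(\cC)=\min\{|F|: \cC_F=\cC\}$ to make $\chi$ well defined on covers, the paper instead takes $\chi(\cC)=|\cC|$ and passes to irredundant generating sets, for which $|F|\le\big|\At(F)\big|$, yielding the same estimate -- the two witnessing functions are interchangeable, and your separate treatment of $d=0$ (and of well-definedness) is sound, if not strictly necessary.
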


\begin{proof}
As before, set $\fC = \{ \cC_F: F\sub \Gamma \mbox{ finite}\} \sub \fcc(K_\fA)$.

Take any finite $F\sub\Gamma$. To deal with $\At(F)$ we can assume that $F$
is irredundant, that is, no $g\in F$ belongs to the algebra generated by $F\setminus \{g\}$.
It is easy to check that if $F$ is irredundant then $|F|\le \big|\At(F)\big|$.

It is now sufficient to take irredundant finite sets $F_1,F_2, \ldots, F_k \sub\Gamma$ and observe that 
$\cC_F$ is a joint refinement of $\cC_{F_1}, \ldots, \cC_{F_k}$, where $F=F_1\cup \ldots \cup F_k$. Moreover,
\[ \big| \At(F) \big|\le M|F|^d\le  M(|F_1|+ \ldots +|F_k|)^d\le M( |\At(F_1)|+\ldots+|\At(F_k)|)^d,\]
which gives the result.
\end{proof}

A  Boolean algebra $\fA$ is an {\em  interval algebra}  if $\fA$ is generated by 
a subset $\Gamma\sub\fA$ which is linearly ordered.
Heindorf \cite{He97} proved that a Boolean algebra $\fA$ embeds into an interval algebra 
if and only if it is generated by a set $\Gamma$ with the property that any two elements of $\Gamma$ are either comparable or disjoint (so,  in particular,  $\Gamma$ contains no independent pair).
 There is a larger class $\II(d)$, introduced in \cite{AMCP}, of Boolean algebras satisfying 
the condition of Theorem \ref{ba:1}. 

\begin{definition}\label{ba:2}
Let $d \in \N$. A Boolean algebra $\fA$ is said to be in the class $\II(d)$ if $\fA$ is generated by a set $\Gamma$ with the property that no $d+1$ elements in $\Gamma$ are independent.
\end{definition}

Here we use the usual notion of Boolean independence; 
recall that $a_1, \ldots, a_d \in \fA$ are said to be independent if 
\[ \bigcap_{i \in S_1} a_i \cap \bigcap_{j \in S_2} a_j^c \neq {\bf 0},\]  
for every two disjoint sets 
$S_1, S_2 \sub \{ 1,2, \ldots, d\}$.

It is easy to check that the class $\II(d)$ contains all the $d$-fold free products of interval algebras.
The fact that every Boolean algebra in the class $\II(d)$ satisfies the condition of Theorem \ref{ba:1} is a consequence of the following Sauer-Shelah Lemma; see e.g.\ \cite{AMCP} for a proof and further references.

\begin{lemma}[Sauer-Shelah]\label{ssl}
	Let $N,d$ be natural numbers with $0\le d < N$ and let $T=\lbrace 1,2,\dots,N \rbrace$.
	Then for every family $C \sub 2^T$ with
	\[|C|> \binom{N}{0}+ \binom{N}{1}+ \dots + \binom{N}{d},\]
	there exists a set $S \sub T$ with $|S|=d+1$ such that $ \lbrace f|_S : f \in C \rbrace = 2^S.$
\end{lemma}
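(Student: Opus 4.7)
The plan is to establish the following sharpening due to Pajor: for any family $C \subseteq 2^T$, the number of subsets of $T$ shattered by $C$ is at least $|C|$, where we say $S \subseteq T$ is \emph{shattered} by $C$ when $\{f \cap S : f \in C\} = 2^S$. The Sauer-Shelah bound follows at once: if $|C| > \sum_{k=0}^d \binom{N}{k}$, there are more shattered sets than subsets of $T$ of size $\le d$, so some shattered $S$ has $|S|\ge d+1$; any $(d+1)$-element subset of $S$ is then itself shattered, providing the required set.

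I would prove Pajor's inequality by induction on $N$. The base case $N=0$ is trivial. For the inductive step, fix the coordinate $N$ and split
\[C_0 = \{f \in C : N \notin f\}, \qquad C_1' = \{f \setminus \{N\} : f \in C,\ N \in f\},\]
both viewed as families of subsets of $T' = \{1,\ldots,N-1\}$; observe that $|C_0| + |C_1'| = |C|$. Write $\mathcal{S}_0$ and $\mathcal{S}_1$ for the collections of subsets of $T'$ shattered by $C_0$ and $C_1'$ respectively.

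The key combinatorial observation is that every $S \in \mathcal{S}_0 \cup \mathcal{S}_1$ is shattered by $C$, while every $S \in \mathcal{S}_0 \cap \mathcal{S}_1$ yields an additional shattered set $S \cup \{N\}$ for $C$: one uses elements of $C_0$ to realize those patterns on $S \cup \{N\}$ that exclude $N$, and preimages of elements of $C_1'$ to realize those that contain $N$. These two collections of shattered sets of $C$ are disjoint, since the former omit $N$ and the latter contain it. Combining inclusion-exclusion with the inductive hypothesis applied to $C_0$ and $C_1'$,
\[|\mathcal{S}_0 \cup \mathcal{S}_1| + |\mathcal{S}_0 \cap \mathcal{S}_1| = |\mathcal{S}_0| + |\mathcal{S}_1| \ge |C_0| + |C_1'| = |C|,\]
so $C$ shatters at least $|C|$ subsets of $T$.

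The main obstacle is isolating the right quantity to induct on. The literal Sauer-Shelah statement, phrased in terms of shattered sets of size exactly $d+1$, does not behave well under the decomposition $C = C_0 \cup C_1$: the contribution coming from $\mathcal{S}_0 \cap \mathcal{S}_1$ shifts the relevant dimension by one, so a clean induction on $N$ with $d$ fixed is awkward. Pajor's idea of counting \emph{all} shattered sets simultaneously is precisely what turns the two pieces $\mathcal{S}_0$ and $\mathcal{S}_1$ into additive contributions and closes the induction in one stroke.
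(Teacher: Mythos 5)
Your proposal is correct, and it is worth noting that the paper itself contains no proof of this lemma at all: it simply cites \cite{AMCP} ``for a proof and further references.'' So your argument is not a variant of the paper's proof but a self-contained substitute for the citation. The argument via Pajor's strengthening checks out in every step: with $C_0=\{f\in C: N\notin f\}$ and $C_1'=\{f\setminus\{N\}: f\in C,\ N\in f\}$ one indeed has $|C_0|+|C_1'|=|C|$; every $S\subseteq T'$ shattered by $C_0$ or by $C_1'$ is shattered by $C$ (for $C_1'$ because $N\notin S$, so removing $N$ does not change traces on $S$); each $S\in\mathcal{S}_0\cap\mathcal{S}_1$ promotes to the shattered set $S\cup\{N\}$, and these promoted sets are disjoint from the first collection since they contain $N$; inclusion--exclusion plus the inductive hypothesis then gives at least $|C|$ shattered sets. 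The deduction of the lemma as stated is also complete, including the small but necessary observation that any $(d+1)$-element subset of a shattered set of size $\geq d+1$ is itself shattered, which is what produces a set $S$ with $|S|=d+1$ exactly as the statement demands (modulo the harmless identification of $f\in 2^T$ with $\{i: f(i)=1\}$, so that $f|_S$ becomes $f\cap S$). One quibble with your closing paragraph: the classical Sauer--Shelah argument does admit a clean induction without Pajor's device, namely a joint induction on $N$ and $d$ using the projection $D=\{f\cap T': f\in C\}$ and the doubled family $E=\{g\subseteq T': g\in C \text{ and } g\cup\{N\}\in C\}$, where $|C|=|D|+|E|$, the VC dimension of $E$ drops to $d-1$, and Pascal's rule $\binom{N}{k}=\binom{N-1}{k}+\binom{N-1}{k-1}$ sums the two bounds; so the ``awkwardness'' you cite is avoidable. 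What Pajor's inequality buys instead is a genuinely stronger conclusion (a lower bound on the number of shattered sets, not merely a cardinality bound) with an induction on $N$ alone, and your execution of it is correct and efficient.
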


\begin{lemma}\label{ba:3}
	Suppose $\fA \in \II(d)$ is generated by a set $\Gamma$ with no $d+1$ independent elements.
	Then for every finite set $F\subseteq \Gamma $ we have 
\[ \big| \At(F)\big| \leq \binom{|F|}{0}+ \binom{|F|}{1}+ \dots + \binom{|F|}{d} \leq (d+1)|F|^d.\] 
\end{lemma}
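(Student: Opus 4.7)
The plan is to encode atoms of the subalgebra generated by $F$ as sign sequences in $2^F$ and feed the resulting trace to the Sauer-Shelah Lemma \ref{ssl}. Write $F=\{g_1,\ldots,g_N\}$ with $N=|F|$ and set $T=\{1,\ldots,N\}$. Every atom $a\in\At(F)$ has a unique representation $a=\bigcap_{i\in S}g_i\cap\bigcap_{j\in T\setminus S}g_j^c$ with $a\ne{\bf 0}$, so the assignment $a\mapsto\chi_S$ identifies $\At(F)$ with a family $C\sub 2^T$ of cardinality equal to $|\At(F)|$.

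Next I would argue by contradiction. The case $N\le d$ is automatic, since then $|\At(F)|\le 2^N=\sum_{k=0}^{N}\binom{N}{k}$ already matches the claimed bound. So I may assume $N>d$ and, for contradiction, that $|C|>\binom{N}{0}+\binom{N}{1}+\cdots+\binom{N}{d}$. By Lemma \ref{ssl} there exists $S\sub T$ with $|S|=d+1$ such that $\{f|_S:f\in C\}=2^S$. Unwinding the encoding, this says that for every partition $S=S_1\sqcup S_2$ some atom of the subalgebra generated by $F$ is contained in $\bigcap_{i\in S_1}g_i\cap\bigcap_{j\in S_2}g_j^c$, so in particular this meet is nonzero. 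Therefore $\{g_i:i\in S\}\sub\Gamma$ is a family of $d+1$ independent elements, contradicting the hypothesis on $\Gamma$. This gives the first inequality $|\At(F)|\le\binom{N}{0}+\cdots+\binom{N}{d}$.

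The second, purely numerical, inequality follows from the pointwise bound $\binom{N}{k}\le N^k\le N^d$, valid for $0\le k\le d$ and $N\ge 1$, which yields $\binom{N}{0}+\cdots+\binom{N}{d}\le(d+1)N^d=(d+1)|F|^d$.

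The only step that I would take care with is the translation between Sauer-Shelah's shattering conclusion (the restriction $C\to 2^S$ is onto) and the algebraic statement that $\{g_i:i\in S\}$ is an independent family in $\fA$ in the sense of Definition \ref{ba:2}. This correspondence is essentially tautological once the encoding of atoms by sign sequences is set up, so the full writeup should be quite short.
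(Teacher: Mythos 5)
Your proposal is correct and follows essentially the same route as the paper: encode the atoms of the subalgebra generated by $F$ as sign functions in $2^T$ and apply the Sauer--Shelah Lemma, with the shattered set of size $d+1$ translating directly into $d+1$ independent elements of $\Gamma$, followed by the same numerical estimate $\binom{|F|}{k}\le |F|^d$. Your explicit handling of the case $|F|\le d$ (where Lemma \ref{ssl} requires $d<N$) is a small point of care the paper's writeup leaves implicit, but it does not constitute a different approach.
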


\begin{proof}
Set $F=\lbrace a_1, a_2, \dots, a_N \rbrace$ and $T=\lbrace 1,2,\dots,N \rbrace$. Then, every element in $\At(F)$ has a unique representation of the form $a_1 ^{f(1)} \cap a_2 ^{f(2)} \cap \dots \cap a_N ^{f(N)}$, where $f \in 2^T$, $a^0=a^c$ and $a^1=a$ for every $a\in \fA$. Define
\[ C= \lbrace f \in 2^T: a_1 ^{f(1)} \cap a_2 ^{f(2)} \cap \dots \cap a_N ^{f(N)}  \in \At(F) \rbrace .\]

It follows from the fact that $\Gamma$ contains no $d+1$ independent elements and Sauer-Shelah Lemma that 
\[  \big| \At(F)\big| =|C| \leq \binom{N}{0}+ \binom{N}{1}+ \dots + \binom{N}{d} =\]
\[ =\binom{|F|}{0}+ \binom{|F|}{1}+ \dots + \binom{|F|}{d}\leq (d+1)|F|^d,\]

since $\binom{|F|}{i} \leq |F|^d$ for every $i \leq d$.
\end{proof}

An immediate consequence of Theorem \ref{ba:1} and Lemma \ref{ba:3} is the following result.

\begin{corollary}\label{ba:4}
For every Boolean algebra  $\fA \in \II(d)$, $\fd(K_\fA)\le d$.

\end{corollary}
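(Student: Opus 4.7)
The plan is essentially bookkeeping: the corollary is a direct combination of Theorem \ref{ba:1} and Lemma \ref{ba:3}, so there is no real obstacle, just the task of writing down the chain of implications cleanly.

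First I would unpack the hypothesis. Since $\fA \in \II(d)$, by Definition \ref{ba:2} there exists a generating set $\Gamma \subseteq \fA$ such that no $d+1$ elements of $\Gamma$ are (Booleanly) independent. This is exactly the hypothesis of Lemma \ref{ba:3}.

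Next I would invoke Lemma \ref{ba:3} to obtain the quantitative atom bound: for every finite $F \subseteq \Gamma$,
\[ \big|\At(F)\big| \le \binom{|F|}{0}+\binom{|F|}{1}+\dots+\binom{|F|}{d} \le (d+1)|F|^d. \]
Setting $M = d+1$, this shows that the generating set $\Gamma$ satisfies precisely the hypothesis of Theorem \ref{ba:1} with the given value of $d$.

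Finally, I would apply Theorem \ref{ba:1} to conclude that $\fd(K_\fA) \le d$. If one wants a word about degenerate cases (for instance $d=0$, where $\II(0)$ consists of the trivial algebra and the atom bound forces $\fA$ to be finite), then Lemma \ref{fd:1.5} already handles them. Overall the corollary is a two-line deduction, so the proof proposal just records these two invocations.
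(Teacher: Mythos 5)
Your proposal is correct and matches the paper exactly: the paper states Corollary \ref{ba:4} as an immediate consequence of Theorem \ref{ba:1} (applied with $M=d+1$) and Lemma \ref{ba:3}, which is precisely the two-step deduction you record. Your parenthetical on the degenerate case $d=0$ is also fine, though unnecessary, since Definition \ref{fd:1} and Theorem \ref{ba:1} already accommodate it via the constant $M$.
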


We shall finally discuss free dimension of Stone spaces of initial chain algebras;
Corollary \ref{ba:final} below answers a question posed in
\cite{BaurHeindorf97}.

A pseudotree is a partially ordered set $(T, \le)$ in which every set of the form $(-\infty, t]:=\lbrace s \in T: s \leq t \rbrace $ is linearly ordered. S.\ Koppelberg and J.D.\ Monk defined the \textit{initial chain algebra on a pseudotree $T$} to be  the algebra of subsets of $T$ generated by the initial chains $(-\infty, t]$ with $t\in T$.

\begin{lemma}\label{ba:5}
Let $\fA$ be an initial chain algebra on a pseudotree $T$ and $F=\lbrace (-\infty, t_i]: i=1, \dots, n  \rbrace$  a family of initial chains on $T$. Then $\At(F) \leq 2 |F|$.
\end{lemma}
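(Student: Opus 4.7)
The plan is to induct on $n=|F|$. The base case $n=1$ is immediate, since the algebra generated by a single initial chain has at most two atoms. For the inductive step, pick $t_n$ maximal in $T$ among $\{t_1,\dots,t_n\}$, write $C_i=(-\infty,t_i]$ and $F'=F\setminus\{C_n\}$, and invoke the inductive hypothesis $|\At(F')|\le 2(n-1)$. Since each atom of $F'$ is cut by $C_n$ into at most two atoms of $F$, one has $|\At(F)|=|\At(F')|+s$, where $s$ counts those atoms of $F'$ that \emph{actually} split, meeting both $C_n$ and its complement. It therefore suffices to prove $s\le 2$.

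Label each atom $A$ of $F'$ by its signature $\sigma\sub\{1,\dots,n-1\}$, meaning $A=\bigcap_{i\in\sigma}C_i\setminus\bigcup_{j\notin\sigma}C_j$. If some $i\in\sigma$ has $t_i\le t_n$, then $x\in A$ yields $x\le t_i\le t_n$, so $A\sub C_n$ and $A$ is not split. By the maximality of $t_n$, every $i<n$ satisfies either $t_i\le t_n$ or $t_i$ incomparable to $t_n$; setting $I^-=\{i<n : t_i \text{ is incomparable to } t_n\}$, every split atom must have $\sigma\sub I^-$. The signature $\sigma=\emptyset$ accounts for at most one split atom (namely $T\setminus\bigcup_{i<n}C_i$).

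The crux is to show that at most one atom with $\emptyset\neq\sigma\sub I^-$ is split. Assume for contradiction that $A,A'$ are two such with distinct signatures $\sigma,\sigma'$. Since the function $x\mapsto\{i<n:x\le t_i\}$ is non-increasing as $x$ ascends the chain $C_n$ and both atoms meet $C_n$, the signatures $\sigma,\sigma'$ are comparable by inclusion; after relabelling we may assume $\sigma\supsetneq\sigma'$ with $A\cap C_n$ lying below $A'\cap C_n$ in $C_n$. Pick $j_0\in\sigma\setminus\sigma'$, witnesses $x\in A\cap C_n$ and $x'\in A'\cap C_n$ with $x\le x'$, and $y\in A\setminus C_n$. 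Writing $C_\tau=\bigcap_{i\in\tau}C_i$, the containment $\sigma\supsetneq\sigma'$ gives $C_\sigma\sub C_{\sigma'}$, so $y\in A\sub C_\sigma\sub C_{\sigma'}$ and $x'\in A'\sub C_{\sigma'}$; hence $y$ and $x'$ are comparable in the linearly ordered set $C_{\sigma'}$. Now $y\le t_{j_0}$ (as $j_0\in\sigma$) while $x'\not\le t_{j_0}$ (as $j_0\notin\sigma'$), ruling out $x'\le y$; therefore $y<x'\le t_n$, contradicting $y\notin C_n$.

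The main obstacle I anticipate is precisely this final witness argument, which uses the chain structure of $C_{\sigma'}$ to rule out two simultaneous splits arising from signatures in $I^-$. The remainder of the proof is a straightforward refinement count combined with the inductive hypothesis.
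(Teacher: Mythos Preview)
Your proof is correct and proceeds by the same overall scheme as the paper---induct on $|F|$ and show that the number of atoms of $\At(F')$ that are split by the new chain is at most~$2$---but the way you establish the ``at most one nontrivial split'' is genuinely different.

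The paper first passes to a well-met tree (citing \cite{BaurHeindorf97}) so that every atom, except possibly one, can be written concretely as an interval $(s,s']$ with endpoints in a controlled set $P$ satisfying $(-\infty,s]\in\langle F\rangle$. It then adds an \emph{arbitrary} new chain $(-\infty,t]$ and argues, using this interval form and the linear order of $(-\infty,t]$, that two interval atoms cannot both be split. Your argument avoids the well-met reduction and the interval representation altogether: by choosing $t_n$ \emph{maximal} among the $t_i$ you force the signature of every split atom into $I^-$, and then the monotonicity of signatures along the chain $C_n$ together with the fact that $C_{\sigma'}$ (for $\sigma'\neq\emptyset$) is itself a chain gives the contradiction directly.

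What each approach buys: the paper's proof yields extra structural information (an explicit interval description of the atoms), at the price of invoking the well-met normalisation from \cite{BaurHeindorf97}. Your proof is more self-contained, using only the pseudotree axioms, and the maximality trick makes the split-count transparent; it would slot into the paper without the preliminary sentence about well-met trees.
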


\begin{proof}
Without loss of generality, we suppose that $T$ is a well-met tree (i.e. every two uncompatible elements have a greatest lower bound) so every element of $\fA$ is of the form 
$b=\bigcup_{s=1}^n(s_i,t_i]$ or $b=T\setminus \bigcup_{s=1}^n(s_i,t_i]$ for some $s_i, t_i \in T$ (see \cite[Corollary 1.3 and Lemma 1.4]{BaurHeindorf97}). 
We claim that there is a set of points $P$ such that every element in $\At(F)$, except at most one, is of the form $(s,s']$ with $s,s' \in P$ and such that $(-\infty, s]$ is in the algebra generated by $ F $ for every $s \in P$. We prove the lemma and the claim by induction on $|F|$. If $F=\lbrace (-\infty, t] \rbrace$, then the result is trivial since the atoms are $(-\infty, t]$ and $(-\infty, t]^c$. Suppose the result true for $n$ and take $F=\lbrace (-\infty, t_i]: i=1, \dots, n  \rbrace$. Set $t \in T$ and $F'= F \bigcup \lbrace (-\infty, t] \rbrace$.  We claim that $(-\infty,t]$ splits at most one atom of the form $(s_1,s_2]$. By contradiction, suppose $(-\infty,t]$ splits two atoms of the form $(s_1,s_2], (s_1',s_2']$ with $s_1,s_2,s_1',s_2' \in P$. Then take $t_s = \min (s_2, t)$ and $t_{s'}= \min (s_2', t)$. Notice that $s_1 < t_s \leq t$ and $s_1' < t_{s'} \leq t$. Since $(-\infty, t]$ is linearly ordered, without loss of generality we may suppose $s_1 \leq s_1'$. Since the intervals $(s_1,s_2]$ and $(s_1',s_2']$ are disjoint, we have that $t_s \notin (s_1',s_2']$, so 
$$ s_1 < t_s \leq s_1' <t .$$ But then $(s_1, s_1']=(-\infty, s_1']\cap (-\infty,s_1]^c $ belongs to the algebra generated by $F$ and it intersects $(s_1, s_2] \in \At(F)$, so $(s_1,s_2] \cap (s_1, s_1']= (s_1, s_2]$ and $s_2 \le s_1'$. But then
$$ s_1 < t_s \le s_2 \le s_1' < t_{s'} \le t$$
and therefore $t_s = \min (s_2, t)=s_2$. This contradicts the fact that $(-\infty, t]$ splits $(s_1, s_2]$.

Thus, $(-\infty,t]$ splits at most one atom of the form $(s_1,s_2]$ and 
$$ \At(F') \leq \At(F)+2 \leq 2|F|+2=2(|F|+1)=2|F'|,$$
as we wanted to prove. 
\end{proof}

\begin{corollary}\label{ba:final}
For every initial chain algebra $\fA$, $\fd(K_\fA)\leq 1$.
In particular, $\fA$ does not contain a free product of an infinite Boolean algebra and an uncountable Boolean algebra.
\end{corollary}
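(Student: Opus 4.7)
The plan is to derive both assertions as direct applications of the machinery developed in the preceding sections, with essentially no new work required beyond bookkeeping.

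For the first statement $\fd(K_\fA) \le 1$, I would apply Theorem \ref{ba:1} with $d = 1$ and $M = 2$. By definition, the initial chain algebra $\fA$ on a pseudotree $T$ is generated by the family $\Gamma = \{(-\infty, t] : t \in T\}$ of initial chains, and Lemma \ref{ba:5} establishes the bound $|\At(F)| \le 2|F|$ for every finite $F \sub \Gamma$. This is precisely the hypothesis of Theorem \ref{ba:1} with the stated parameters, so $\fd(K_\fA) \le 1$ follows at once.

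For the second statement I would argue by contradiction. Suppose that $\fA$ contains a subalgebra isomorphic to a free product $\fB_1 \otimes \fB_2$ with $\fB_1$ infinite and $\fB_2$ uncountable. By Stone duality, this embedding dualizes to a continuous surjection from $K_\fA$ onto $K_{\fB_1 \otimes \fB_2}$, and the Stone space of a free product is homeomorphic to the topological product of the factor Stone spaces, so $K_{\fB_1 \otimes \fB_2}$ is homeomorphic to $K_{\fB_1} \times K_{\fB_2}$. Since free dimension does not increase under continuous surjections (Theorem \ref{fd:5}), we obtain $\fd(K_{\fB_1} \times K_{\fB_2}) \le \fd(K_\fA) \le 1$.

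To conclude, note that $K_{\fB_1}$ is infinite because $\fB_1$ is infinite, and $K_{\fB_2}$ is nonmetrizable because an infinite zero-dimensional compactum is metrizable exactly when its clopen algebra is countable. Hence Theorem \ref{fdp:1}, applied to the product $K_{\fB_2} \times K_{\fB_1}$ with $d=1$, forces $\fd(K_{\fB_2} \times K_{\fB_1}) \ge 2$, contradicting the bound obtained in the previous paragraph. There is no genuine obstacle here; the substantive work has been carried out in Theorem \ref{fdp:1} and Lemma \ref{ba:5}, and all that remains is the routine Stone-duality translation between algebraic embeddings and continuous surjections of Stone spaces, together with the textbook fact that metrizability of the Stone space corresponds to countability of the algebra.
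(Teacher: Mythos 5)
Your proposal is correct and follows essentially the same route as the paper: Theorem \ref{ba:1} together with Lemma \ref{ba:5} gives $\fd(K_\fA)\le 1$, and the second assertion follows from Theorem \ref{fdp:1} via Stone duality and the fact (Theorem \ref{fd:5}) that free dimension does not increase under continuous surjections. The paper's own proof is simply a terser statement of this argument, leaving the duality translation (embedding dualizes to surjection, Stone space of a free product is the product of Stone spaces, uncountable algebra means nonmetrizable Stone space) implicit, whereas you spell it out.
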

\begin{proof}
It follows from Theorem \ref{ba:1} and Lemma \ref{ba:5} that $\fd(K_\fA)\leq 1$.
The last assertion is a consequence of Theorem \ref{fdp:1}, since the Stone space of every uncountable Boolean algebra is nonmetrizable. 
\end{proof}

\end{document}